\newcommand{\IRKA}{\textsf{\small{IRKA}}}
\newcommand{\ZIP}{\textsf{\small{ZIP}}}
\newcommand{\sgn}{\mathop{\mathrm{sgn}}}
\newcommand{\Hs }{\ensuremath{H(s)}}
\newcommand{\Hr}{\ensuremath{H_r(s)}}
\newcommand{\boldA}{\ensuremath{\boldsymbol A}}
\newcommand{\boldP}{\ensuremath{\boldsymbol P}}
\newcommand{\boldx}{\ensuremath{\boldsymbol x}}
\newcommand{\boldAr}{\ensuremath{\boldsymbol{A}_r}}
\newcommand{\boldb}{\ensuremath{\boldsymbol b}}
\newcommand{\boldbr}{\ensuremath{\boldsymbol {b}_r}}
\newcommand{\boldc}{\ensuremath{\boldsymbol c}}
\newcommand{\boldcr}{\ensuremath{\boldsymbol{c}_r}}
\newcommand{\boldWr}{\ensuremath{\boldsymbol{W}_r}}
\newcommand{\boldVr}{\ensuremath{\boldsymbol{V}_r}}
\newcommand{\complex}{\ensuremath{\mathbb{C}}}
\newcommand{\boldZr}{\ensuremath{\boldsymbol Z}_r}
\newfont{\Bb}{msbm10 scaled\magstep0}
\def\IR{\mbox {\Bb R}}
\newcommand{\Hilbspace}{\ensuremath{\mathcal{H}_2 \text{ }}}
\newcommand{\ud}{\,\mathrm{d}}
\newcommand{\Qr}{\ensuremath{\boldsymbol{Q}_r}}
\newcommand{\boldI}{\ensuremath{\boldsymbol{I}}}
\newcommand{\costfun}{\ensuremath{\boldsymbol{\mathcal{J}}}}
\newcommand{\poles}{\ensuremath{\boldsymbol{s}}}
\newcommand{\SSS}{\textsf{\small SSS}}
\newcommand{\Htwo}{\ensuremath{\mathcal{H}_{2}}}
\newtheorem{thm}{Theorem}[section]
\newtheorem{lemma}{Lemma}[section]
\newtheorem{prop}{Proposition}[section]
\theoremstyle{definition}
\newtheorem{dfn}{Definition}
\theoremstyle{plain}
\title{Convergence of the Iterative Rational Krylov Algorithm}
\author{Garret Flagg, Christopher Beattie, Serkan Gugercin \\ 
{\small Department of Mathematics, Virginia Tech.} \\
{\small Blacksburg, VA, 24061-0123} \\
{\small \tt e-mail: \{flagg,beattie,gugercin\}@math.vt.edu}
}
\begin{document}
\maketitle
\begin{abstract}

Iterative Rational Krylov Algorithm (\IRKA) of \cite{H2} is an interpolatory model reduction approach to optimal $\Htwo$ approximation problem. Even though the method has been illustrated to show rapid convergence in various examples, a proof of convergence has not been provided yet. In this note, we show that in the case of state-space symmetric systems, \IRKA~is a  locally convergent fixed point iteration to a local minimum of the underlying $\Htwo$ approximation problem.

\end{abstract}

\section{Introduction}


Consider a single-input-single-output (SISO) linear dynamical system in state-space form:
\begin{eqnarray}
\label{ltisystemintro}
\dot{\boldx}(t) = \boldA \,\boldx(t) + \boldb\,u(t),\qquad
y(t) = \boldc^T \boldx(t),
\end{eqnarray}
where $\boldA \in \IR^{n \times n}$, and $\boldb,\boldc \in \IR^{n}$. In (\ref{ltisystemintro}),
$\boldx(t)\in \IR^n$, $u(t)\in \IR$, $y(t)\in \IR$, are, respectively, the 
\emph{states}, \emph{input}, and \emph{output} of the dynamical system.
The transfer function of the underlying system is 
$H(s) = \boldc^T(s\boldI-\boldA)^{-1}\boldb$.  
$H(s)$  will be used to denote both the system and its transfer function.

Dynamical systems of the form (\ref{ltisystemintro}) with large state-space dimension $n$ appear in many applications; see, e.g.,  \cite{antB} and \cite{KorR05}. Simulations in such large-scale
settings make enormous demands on computational
resources.  The goal of model reduction is  to  construct a surrogate system
\begin{eqnarray} \label{redsysintro}
\dot{\boldx}_r (t) = \boldA_r\, \boldx_r (t) + \boldb_r u(t),\qquad
y_r(t)  =  \boldc_r^T \boldx_r (t),
\end{eqnarray}
of much smaller dimension $r \ll n$, with  $\boldA_r \in \IR^{r \times r}$ and
$\boldb_r,\,\boldc_r \in \IR^{r}$ such that $y_r(t)$ approximates $y(t)$ well in a certain norm. 
Similar to $H(s)$, the transfer function $H_r(s)$ of the reduced-model (\ref{redsysintro}) is given by 
$H_r(s) = \boldc_r^T(s\boldI_r-\boldA_r)^{-1}\boldb_r$.   We consider reduced-order
models, $H_r(s)$, that are obtained via  projection. 
That is, we choose full rank matrices 
 $\boldVr,\boldWr \in \IR^{n \times r}$ 
such that $\boldWr^T\boldVr$ is invertible and define the reduced-order state-space realization with (\ref{redsysintro}) and 
{\small
\begin{equation}  \label{red_projection}
\boldAr = (\boldsymbol{W}_r^T\boldsymbol{V}_r)^{-1}\boldWr^T \boldA \boldVr,~\boldbr = (\boldsymbol{W}_r^T\boldsymbol{V}_r)^{-1}\boldWr^T\boldb,~
\boldcr = \boldVr^T\boldc.
\end{equation}
}
Within this ``projection framework," selection of $\boldWr$ and $\boldVr$  completely determines the reduced system -- indeed, it is sufficient to specify only the \emph{ranges} of $\boldWr$ and $\boldVr$ in order to determine $H_r(s)$.   Of particular utility for us is a result by Grimme \cite{Grim}, that gives conditions on $\boldWr$ and $\boldVr$ so that the associated reduced-order system, 
$H_r(s)$, is a \emph{rational Hermite interpolant} to the original system, $H(s)$.  
%
%
\begin{thm}[Grimme \cite{Grim}]  \label{thm:rationalkrylov}
Given $\Hs=\boldc^T(s\boldI-\boldA)^{-1}\boldb$, and $r$ distinct points $s_1, \dots, s_{r} \in \complex$, let 
\begin{equation} \label{eqn:VrWr}
\boldsymbol{V}_r=\lbrack (s_1\boldsymbol{I} -\boldsymbol{A})^{-1}\boldsymbol{b} \dots  (s_r\boldsymbol{I} -\boldsymbol{A})^{-1}\boldsymbol{b}\rbrack
\qquad
\boldsymbol{W}_r^T=
\begin{bmatrix}
\boldsymbol{c}^T(s_{1}\boldsymbol{I}-\boldsymbol{A})^{-1}\\
\vdots \\
\boldsymbol{c}^T(s_{r}\boldsymbol{I}-\boldsymbol{A})^{-1}
\end{bmatrix}.
\end{equation}

Define the reduced-order model $H_r(s) = \boldcr^T(s \boldI_r - \boldAr)^{-1}\boldbr$ as in (\ref{red_projection}) 
Then $H_r$ is a rational Hermite interpolant to $H$ at $s_1, \dots, s_{r}$:
\begin{equation} \label{eqn:hermite}
H(s_i) = H_r(s_i) \qquad {\rm and} \qquad H'(s_i) = H'_r(s_i)~~~{\rm for}~~~i=1,\ldots,r. 
\end{equation}
\end{thm}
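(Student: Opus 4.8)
The strategy is to verify the Hermite interpolation conditions directly by exploiting the structure of the projection matrices $\boldVr$ and $\boldWr$ in (\ref{eqn:VrWr}). The key observation is that the columns of $\boldVr$ and the rows of $\boldWr^T$ are precisely the quantities that appear when one expands the transfer function $H(s)$ and its derivative at the interpolation points. First I would introduce the shorthand $\boldV_i := (s_i\boldI - \boldA)^{-1}\boldb$ and $\boldw_i^T := \boldc^T(s_i\boldI - \boldA)^{-1}$, so that the $i$-th column of $\boldVr$ is $\boldV_i$ and the $i$-th row of $\boldWr^T$ is $\boldw_i^T$. The central algebraic fact I would establish is the pair of ``shifted'' identities
\begin{equation} \label{eqn:shift}
(s_i\boldI_r - \boldAr)(\boldWr^T\boldVr)^{-1}\boldWr^T\,\boldV_i = \boldbr, \qquad \boldw_i^T\,\boldVr(s_i\boldI_r - \boldAr)^{-1} = \boldcr^T,
\end{equation}
which say that $\boldVr$ maps a certain reduced resolvent vector onto the full resolvent vector $\boldV_i$ (up to the projection), and dually for $\boldWr$.

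To get (\ref{eqn:shift}), I would use the resolvent identity in the form $(s_i\boldI - \boldA)\boldV_i = \boldb$. Left-multiplying by $(\boldWr^T\boldVr)^{-1}\boldWr^T$ and inserting $\boldVr(\boldWr^T\boldVr)^{-1}\boldWr^T$ appropriately, the definition $\boldAr = (\boldWr^T\boldVr)^{-1}\boldWr^T\boldA\boldVr$ together with the fact that $\boldV_i$ is itself a column of $\boldVr$ (so that $(\boldWr^T\boldVr)^{-1}\boldWr^T\boldV_i = \bolde_i$, the $i$-th unit vector) collapses the expression to the first identity in (\ref{eqn:shift}); the second follows by the transposed/dual argument starting from $\boldw_i^T(s_i\boldI - \boldA) = \boldc^T$ and using $\boldcr^T = \boldVr^T\boldc$. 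Once (\ref{eqn:shift}) is in hand, the point-interpolation condition $H(s_i) = H_r(s_i)$ is immediate: $H_r(s_i) = \boldcr^T(s_i\boldI_r - \boldAr)^{-1}\boldbr$, and substituting the first relation of (\ref{eqn:shift}) for $\boldbr$ gives $\boldcr^T(\boldWr^T\boldVr)^{-1}\boldWr^T\boldV_i$, which by $\boldcr^T = \boldVr^T\boldc$ and $(\boldWr^T\boldVr)^{-1}\boldWr^T\boldV_i = \bolde_i$ reduces to $\boldc^T\boldV_i = \boldc^T(s_i\boldI-\boldA)^{-1}\boldb = H(s_i)$ — and a symmetric computation using the second relation works equally well.

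For the Hermite (derivative) condition I would differentiate the resolvent: since $\frac{d}{ds}(s\boldI-\boldA)^{-1} = -(s\boldI-\boldA)^{-2}$, we have $H'(s_i) = -\boldc^T(s_i\boldI-\boldA)^{-2}\boldb = -\boldw_i^T\boldV_i$, and likewise $H_r'(s_i) = -\boldcr^T(s_i\boldI_r-\boldAr)^{-2}\boldbr$. The plan is to split the reduced expression as $\boldcr^T(s_i\boldI_r-\boldAr)^{-1}\cdot(s_i\boldI_r-\boldAr)^{-1}\boldbr$ and apply the \emph{second} identity of (\ref{eqn:shift}) on the left factor and the \emph{first} identity on the right factor, so that $H_r'(s_i) = -\boldw_i^T\boldVr(\boldWr^T\boldVr)^{-1}\boldWr^T\boldV_i = -\boldw_i^T\boldV_i = H'(s_i)$, again using that $\boldV_i$ is a column of $\boldVr$ so that $\boldVr(\boldWr^T\boldVr)^{-1}\boldWr^T$ acts as the identity on it. The main obstacle is purely bookkeeping: one must be careful that the oblique projector $\boldPi := \boldVr(\boldWr^T\boldVr)^{-1}\boldWr^T$ fixes $\boldV_i$ and that its transpose-like counterpart fixes $\boldw_i^T$ on the correct side, and that the $s_i$ are distinct (so $\boldVr$ has full column rank and $\boldWr^T\boldVr$ is invertible, which is needed for the realization to make sense) — given the hypotheses this is routine, so no genuine difficulty arises beyond tracking the projector identities carefully.
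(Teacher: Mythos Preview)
The paper does not actually prove Theorem~\ref{thm:rationalkrylov}; it is stated with attribution to Grimme \cite{Grim} and used as a known tool, so there is no ``paper's own proof'' to compare against.  Your plan is the standard projector/resolvent argument and is essentially correct: the crucial facts are that $(\boldWr^T\boldVr)^{-1}\boldWr^T\boldV_i=\bolde_i$ (since $\boldV_i$ is the $i$-th column of $\boldVr$) and its dual, from which $(s_i\boldI_r-\boldAr)^{-1}\boldbr=\bolde_i$ and $\boldcr^T(s_i\boldI_r-\boldAr)^{-1}=\boldw_i^T\boldVr$ follow, and these two together give both the value and derivative matching exactly as you outline.

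One small slip to fix when you write it up: the second identity in your display~(\ref{eqn:shift}) should read $\boldw_i^T\boldVr(s_i\boldI_r-\boldAr)=\boldcr^T$ (or equivalently $\boldcr^T(s_i\boldI_r-\boldAr)^{-1}=\boldw_i^T\boldVr$), not with $(s_i\boldI_r-\boldAr)^{-1}$ on the left of $\boldcr^T$ as you have it --- your subsequent use of it is consistent with the correct version, so this is just a typo in the displayed formula.  Also, distinctness of the $s_i$ by itself does not force $\boldVr$ to have full column rank or $\boldWr^T\boldVr$ to be invertible; one needs (at least implicitly) controllability/observability of $(\boldA,\boldb,\boldc)$ and that no $s_i$ is an eigenvalue of $\boldA$.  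The paper simply assumes $\boldWr^T\boldVr$ is invertible in (\ref{red_projection}), so this is a standing hypothesis rather than something you need to derive.
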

Rational interpolation within this ``projection framework" was first proposed by Skelton \emph{et al}. \cite{Skeltonlate},\cite{Skelt1},\cite{Skelt2}.  
Later in  \cite{Grim}, Grimme established the connection with the rational Krylov method of Ruhe \cite{ruhe1998rational}.

Significantly, Theorem \ref{thm:rationalkrylov} gives an explicit method for computing a reduced-order system that is a Hermite interpolant of the orginal system for nearly \emph{any} set of distinct points, $\{s_1, \dots, s_{r} \}$, yet it is not apparent how one should choose these interpolation points in order to assure a high-fidelity reduced-order model in the end.  Indeed, the lack of such a strategy had been a major drawback 
for interpolatory model reduction until recently, when an effective strategy for selecting interpolation points was proposed in \cite{H2} yielding reduced-order models that solve
\begin{equation} \label{optimalHtwo}
\| H - H_r \|_{\Htwo} = 
\min \limits_{dim(\hat{H}_r) = r}  \left\| H-\hat{H}_r \right\|_{\Htwo}.
\end{equation}
where the \Hilbspace~system norm is defined in the usual way: 
\begin{equation}
\left \| H \right \|_{\Htwo} = \left(\frac{1}{2\pi} \int_{-\infty}^\infty \mid H(\jmath \omega) \mid^2 d \omega \right)^{1/2}.
\end{equation}

The optimization problem (\ref{optimalHtwo}) has been studied extensively, 
see, for example,
\cite{meieriii1967approximation,wilson1970optimum,H2,spanos1992anewalgorithm, fulcheri1998mrh,vandooren2008hom,gugercin2005irk,beattie2007kbm,beattie2009trm,zigic1993contragredient} and references therein. (\ref{optimalHtwo}) is a nonconvex optimization problem and finding global minimizers will be infeasible, typically.  Hence, the usual interpretation of (\ref{optimalHtwo}) involves finding \emph{local} minimizers and a common approach to accomplish this is to construct reduced-order models satisfying first-order necessary optimality conditions.  This may be posed either in terms of solutions to Lyapunov equations (e.g., \cite{wilson1970optimum,spanos1992anewalgorithm, zigic1993contragredient}) 
or in terms of interpolation (e.g., \cite{wilson1970optimum,H2,vandooren2008hom, beattie2009trm}): 
\begin{thm}(\cite{meieriii1967approximation,H2}) \label{thm:h2optimal}
Given \Hs, let $H_r(s)$ be a solution to (\ref{optimalHtwo}) with simple poles $\hat{\lambda}_1,\dots,\hat{\lambda}_r$. Then 
\begin{equation} \label{eqn:h2cond}
H(-\hat{\lambda}_i)=H_r(-\hat{\lambda}_i)~~~{\rm and}~~~
H'(-\hat{\lambda}_i)=H_r'(-\hat{\lambda}_i)~~~{\rm for}~~~
 i=1,\dots,r.
\end{equation}
That is, any $\mathcal{H}_2$-optimal reduced order model of order $r$ with simple poles will be a Hermite interpolant to $H(s)$
at the reflected image of the reduced poles through the origin. 
\end{thm}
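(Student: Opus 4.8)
The plan is to convert the constrained optimization (\ref{optimalHtwo}) into a smooth, unconstrained minimization over a natural set of coordinates for order-$r$ systems, and then to recognize the vanishing of the gradient of the $\Htwo$ cost as precisely the Hermite conditions (\ref{eqn:h2cond}). Since the optimal $H_r$ is assumed to have simple poles $\hat\lambda_1,\dots,\hat\lambda_r$, every order-$r$ transfer function sufficiently close to $H_r$ has a unique pole--residue expansion
\[
\hat H_r(s)\;=\;\sum_{k=1}^r \frac{\hat\phi_k}{s-\hat\lambda_k},
\]
so the $2r$ parameters $\{\hat\lambda_k,\hat\phi_k\}_{k=1}^r$ give a local chart on the manifold of $r$-th order rational transfer functions. (When some $\hat\lambda_k$ are complex they come in conjugate pairs with conjugate residues, and one uses their real and imaginary parts -- equivalently, the real coefficients of the corresponding quadratic factors -- as genuine real coordinates; I would suppress this routine bookkeeping.) A local minimizer of (\ref{optimalHtwo}) is then an interior stationary point of the smooth function $\mathcal J(\hat\lambda,\hat\phi):=\|H-\hat H_r\|_{\Htwo}^2$ written in this chart.

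The computational engine is the residue representation of the $\Htwo$ inner product: for stable rational $F$ and $G(s)=\sum_k \rho_k/(s-\mu_k)$, writing $\langle F,G\rangle_{\Htwo}=\frac{1}{2\pi}\int_{-\infty}^\infty F(\jmath\omega)\overline{G(\jmath\omega)}\,d\omega$ as a contour integral along the imaginary axis, closing it in the right half-plane, and collecting residues at the points $-\mu_k$ gives $\langle F,G\rangle_{\Htwo}=\sum_k \rho_k\,F(-\mu_k)$; the same computation with a double pole gives $\langle F,(s-\mu)^{-2}\rangle_{\Htwo}=-F'(-\mu)$. Applying the first identity with $F=H$ and with $F=\hat H_r$ expresses the cost in closed form,
\[
\mathcal J(\hat\lambda,\hat\phi)\;=\;\|H\|_{\Htwo}^2\;-\;2\sum_{k=1}^r \hat\phi_k\,H(-\hat\lambda_k)\;+\;\sum_{j,k=1}^r \frac{\hat\phi_j\hat\phi_k}{-\hat\lambda_j-\hat\lambda_k}.
\]

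From here I would simply differentiate. Using $\partial\hat H_r/\partial\hat\phi_i=(s-\hat\lambda_i)^{-1}$ together with the inner-product identities gives $\partial\mathcal J/\partial\hat\phi_i=-2H(-\hat\lambda_i)+2\hat H_r(-\hat\lambda_i)$, and using $\partial\hat H_r/\partial\hat\lambda_i=\hat\phi_i(s-\hat\lambda_i)^{-2}$ gives $\partial\mathcal J/\partial\hat\lambda_i=2\hat\phi_i\big(H'(-\hat\lambda_i)-\hat H_r'(-\hat\lambda_i)\big)$. Evaluating the stationarity conditions at the minimizer $H_r$ then yields $H(-\hat\lambda_i)=H_r(-\hat\lambda_i)$ for every $i$, and -- because a minimal realization of $H_r$ has all residues $\hat\phi_i\neq0$ -- also $H'(-\hat\lambda_i)=H_r'(-\hat\lambda_i)$, which is exactly (\ref{eqn:h2cond}). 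Equivalently, and more intrinsically: stationarity says the error $H-H_r$ is $\Htwo$-orthogonal to the tangent space of the order-$r$ manifold at $H_r$, which is spanned by $\{(s-\hat\lambda_i)^{-1},(s-\hat\lambda_i)^{-2}\}_{i=1}^r$, and the two inner-product identities convert this orthogonality into the vanishing of $H-H_r$ and of its derivative at the reflected poles $-\hat\lambda_i$.

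The part that will require the most care is the setup rather than the calculus. First, one must verify that the pole--residue map really does parametrize a neighborhood of $H_r$ within the manifold of order-$r$ systems with the correct real dimension; it is precisely the hypothesis that the poles of $H_r$ are \emph{simple} that makes this chart nondegenerate and the tangent vectors $(s-\hat\lambda_i)^{-1},(s-\hat\lambda_i)^{-2}$ linearly independent. Second, one must handle complex-conjugate pole pairs so that $\mathcal J$ is differentiated with respect to honest real variables -- or sidestep this entirely by arguing through the coordinate-free orthogonality statement above. The remaining technical points -- differentiating under the integral sign defining $\|\cdot\|_{\Htwo}^2$ and justifying the contour-closing step -- are routine, since all integrands are proper rational functions decaying quadratically once the poles are confined to the open left half-plane.
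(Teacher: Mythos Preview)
The paper does not prove Theorem~\ref{thm:h2optimal}; it is stated with attribution to \cite{meieriii1967approximation,H2} and used as background. Your proposal is a correct sketch of the standard derivation found in those references: parametrize order-$r$ systems locally by poles and residues, express the $\Htwo$ cost via the residue formula for the inner product, and read off the first-order stationarity conditions as the Hermite interpolation conditions at the mirrored poles. Since there is no proof in the paper to compare against, it suffices to note that your argument aligns with the cited sources and that the caveats you flag (real parametrization of conjugate pole pairs, nonvanishing residues from minimality) are the right ones.
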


Although this result might appear to reduce the problem of $\mathcal{H}_2$-optimal model approximation to a straightforward application of Theorem \ref{thm:rationalkrylov} to calculate a Hermite interpolant on the set of reflected poles, $\{-\hat{\lambda}_1,\dots,-\hat{\lambda}_r\}$, these pole locations will not be known \emph{a priori}.   Nonetheless, these pole locations can be determined efficiently with the Iterative Rational Krylov Algorithm ({\IRKA}) of Gugercin \emph{et al.} \cite{H2}.  Starting from an arbitrary initial selection of interpolation  points, \IRKA~iteratively corrects the interpolation points until (\ref{eqn:h2cond}) is satisfied. A brief sketch of \IRKA~is given below.
\begin{figure}[ht]
\begin{center}
    \framebox[6.25in][t]{
    \begin{minipage}[c]{5.75in}
    {
\textbf{\textsf{\small{Algorithm}}~\IRKA}.  \emph{Iterative Rational Krylov Algorithm \cite{H2}} 

\quad Given a full-order $H(s)$, a reduction order $r$,  and convergence tolerance $\mathsf{tol}$.
\begin{enumerate}
\item Make an initial selection of $r$ distinct interpolation points, $\{s_i\}_1^r$, that is closed under complex conjugation.
\item Construct \boldVr\ and \boldWr\ as in (\ref{eqn:VrWr}).
\item while (relative change in $\{s_i\} > tol$)
\begin{enumerate}[a.)]
\item $\boldAr= (\boldsymbol{W}_r^T\boldsymbol{V}_r)^{-1}\boldWr^T \boldA\boldVr$
\item Solve $r\times r$ eigenvalue problem $\boldAr\mathbf{u}=\lambda\mathbf{u}$ and assign $s_i \leftarrow -\lambda_i(\boldAr)$ for $i=1,\dots ,r$.
\item Update \boldVr\ and \boldWr\  with new $s_i$'s using (\ref{eqn:VrWr}).
\end{enumerate}
\item $\boldAr=(\boldsymbol{W}_r^T\boldsymbol{V}_r)^{-1}\boldWr^T$\boldA\boldVr, ~~~$\boldbr=(\boldsymbol{W}_r^T\boldsymbol{V}_r)^{-1}\boldWr^T\boldb$, ~and~ $\boldcr=\boldVr^T\boldc$.
\end{enumerate}
 }
    \end{minipage}
    }
    \end{center}
  \end{figure}

 \IRKA\ has been remarkably successful in producing high fidelity reduced-order approximations and has
been successfully applied to finding $\Htwo$-optimal reduced models for systems of high order, $n>160,000$, see \cite{KRXC08}. 
For details on \IRKA, see \cite{H2}.   

Notwithstanding typically observed rapid convergence of the \IRKA~iteration to interpolation points that generally yield high quality reduced models, no convergence theory for \IRKA~has yet been established.  Evidently from the description above, \IRKA~may be viewed as a fixed point iteration with fixed points coinciding with the stationary points of the $\Htwo$ minimization problem.  Saddle points and local maxima  of the $\Htwo$ minimization problem are known to be repellent \cite{krajewski}. However, despite effective performance in practice,  it has not yet been established that local minima are attractive fixed points.  

 In this paper, we give a proof of this for the special case of state-space-symmetric systems and establish the convergence of \IRKA~for this class of systems.   





\section{State-Space-Symmetric Systems}
 \begin{dfn}\label{dfn:SSS}
\Hs=$\boldc^T(s\boldI-\boldA)^{-1}\boldb$ is \emph{state-space-symmetric} (\SSS)  if \boldA=$\boldA^T$ and $\boldc=\boldb$.
\end{dfn}
\SSS~systems appear in many important applications such as in the analysis of RC circuits and in inverse problems involving 3D Maxwell's equations \cite{druskin2009solution}. 

A closely related class of systems is the class of zero-interlacing-pole (\ZIP) systems.
\begin{dfn}\label{dfn:ZIP}
A system
${\displaystyle \Hs=K\frac{\prod\limits_{i=1}^{n-1}(s-z_i)}{\prod\limits_{j=1}^{n}(s-\lambda_j)} }$
is a \emph{strictly proper \ZIP~system} provided that $$0>\lambda_1 >z_1 >\lambda_2 > z_2 > \lambda_3 > \dots > z_{n-1} > \lambda_n.$$  
\end{dfn}

The following relation serves to characterize \ZIP~systems.
\begin{prop} \cite{Zipchar} \label{prop:zipchar}
\Hs\ is a strictly proper \ZIP~system if and only if \Hs~can be written as
${\displaystyle \Hs=\sum_{i=1}^{n}\frac{b_i}{s-\lambda_i}}$ with $\lambda_i < 0$,  $b_i>0$, and $ \lambda_i\ne \lambda_j$  
 for all $ i\ne j.$
\end{prop}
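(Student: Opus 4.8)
The plan is to establish the two implications of the stated equivalence separately. One direction --- that a strictly proper \ZIP~system has the claimed partial fraction form with positive residues --- is essentially sign bookkeeping once partial fractions are available, while the other --- that such a partial fraction form is always \ZIP~--- I would obtain from an intermediate value argument on the real axis. Throughout I use the normalization $K>0$ that accompanies the definition of a \ZIP~system; without it the forward implication only yields that all residues share a common sign.

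For the forward direction, note that strict interlacing forces the poles $\lambda_1,\dots,\lambda_n$ to be distinct and the numerator to have degree $n-1$, so a partial fraction decomposition $H(s)=\sum_{i=1}^{n}b_i/(s-\lambda_i)$ exists with
\[
 b_i \;=\; \lim_{s\to\lambda_i}(s-\lambda_i)H(s) \;=\; \frac{K\,\prod_{k=1}^{n-1}(\lambda_i-z_k)}{\prod_{j\ne i}(\lambda_i-\lambda_j)} .
\]
Using the chain $0>\lambda_1>z_1>\lambda_2>z_2>\dots>z_{n-1}>\lambda_n$, exactly $i-1$ of the factors $\lambda_i-z_k$ are negative (those with $k\le i-1$) and exactly $i-1$ of the factors $\lambda_i-\lambda_j$ are negative (those with $j<i$), so the two sign contributions $(-1)^{i-1}$ cancel and $b_i$ has the same sign as $K$ for every $i$; with $K>0$ this gives $b_i>0$, while $\lambda_i<0$ and $\lambda_i\ne\lambda_j$ are part of the hypothesis.

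For the reverse direction, relabel so that $0>\lambda_1>\lambda_2>\dots>\lambda_n$ and put everything over a common denominator: $H(s)=p(s)/\prod_{j=1}^{n}(s-\lambda_j)$ with $p(s)=\sum_{i=1}^{n}b_i\prod_{j\ne i}(s-\lambda_j)$. The polynomial $p$ is real of degree exactly $n-1$ with leading coefficient $K:=\sum_i b_i>0$, so $H$ is strictly proper; moreover $p(\lambda_j)=b_j\prod_{k\ne j}(\lambda_j-\lambda_k)\ne 0$, so there is no pole--zero cancellation and the $\lambda_j$ are genuine poles. Now fix $i\in\{1,\dots,n-1\}$ and restrict $H$ to the open interval $(\lambda_{i+1},\lambda_i)$, where it is real and continuous. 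As $s\to\lambda_{i+1}^{+}$ the summand $b_{i+1}/(s-\lambda_{i+1})\to+\infty$ while the remaining summands stay bounded near $\lambda_{i+1}$, so $H(s)\to+\infty$; symmetrically $H(s)\to-\infty$ as $s\to\lambda_i^{-}$, driven by $b_i/(s-\lambda_i)$. By the intermediate value theorem $p$ has a root $z_i\in(\lambda_{i+1},\lambda_i)$. This places one root of $p$ in each of the $n-1$ disjoint gaps between consecutive poles; since $\deg p=n-1$, these are all the roots of $p$, each simple, so $p(s)=K\prod_{i=1}^{n-1}(s-z_i)$ with $K>0$ and $0>\lambda_1>z_1>\lambda_2>z_2>\dots>z_{n-1}>\lambda_n$, i.e. $H$ is a strictly proper \ZIP~system.

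I do not anticipate a genuine obstacle: the argument is elementary once the blow-up behavior of $H$ at its poles is made precise. The steps needing care are the sign count in the residue formula --- which genuinely uses the entire interlacing chain, not merely $\lambda_i<0$ --- and the degree-counting step, which simultaneously rules out pole--zero cancellation, forces the roots $z_i$ to be simple, and shows that each gap between consecutive poles contains exactly one of them.
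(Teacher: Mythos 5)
Your proof is correct and complete. Note, however, that the paper offers no proof of this proposition at all: it is quoted verbatim from the cited reference, so there is no internal argument to compare against. Your two implications are the standard elementary route --- the explicit residue formula with the $(-1)^{i-1}$ sign cancellation coming from the full interlacing chain for the forward direction, and the common-denominator plus intermediate-value-theorem argument placing exactly one simple zero of the numerator in each of the $n-1$ gaps between consecutive poles for the converse --- and you handle the points that are easy to fumble (no pole--zero cancellation since $p(\lambda_j)\ne 0$, simplicity of the $z_i$ by degree counting, strict properness via the leading coefficient $\sum_i b_i$). Your remark about the normalization is also well taken: as literally written, Definition~\ref{dfn:ZIP} leaves the sign of $K$ unspecified, and the stated equivalence fails for $K<0$ (all residues would then be negative), so assuming $K>0$, as the cited \ZIP\ literature does implicitly, is exactly the right reading; this causes no trouble elsewhere in the paper, since Lemma~\ref{lemma:reducessslemma} produces the partial-fraction form with positive residues directly and only then invokes this proposition in the direction you prove by the intermediate value theorem.
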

 
The next result clarifies the relationship between \SSS~and \ZIP~systems.
 
 \begin{lemma}\label{lemma:reducessslemma} \cite{zippreserve}
Let  \Hs\  be  \SSS. Then \Hs\ is minimal if and only if the poles of \Hs\ are distinct.  Moreover, every \SSS~system  has a \SSS~minimal realization with distinct poles, and is therefore a strictly proper \ZIP~system.
 \end{lemma}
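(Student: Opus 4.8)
The plan is to push everything through the spectral decomposition of $\boldA$. Since $\boldA=\boldA^T$ there is an orthogonal matrix $\boldsymbol{U}$ and $\boldsymbol{\Lambda}=\mathrm{diag}(\lambda_1,\dots,\lambda_n)$ with $\boldA=\boldsymbol{U}\boldsymbol{\Lambda}\boldsymbol{U}^T$; writing $(\rho_1,\dots,\rho_n)^T=\boldsymbol{U}^T\boldb$ and using $\boldc=\boldb$ yields the partial-fraction form
\begin{equation*}
\Hs=\boldb^T\boldsymbol{U}(s\boldI-\boldsymbol{\Lambda})^{-1}\boldsymbol{U}^T\boldb=\sum_{i=1}^{n}\frac{\rho_i^{\,2}}{s-\lambda_i}.
\end{equation*}
Two features of this expression are used throughout: every residue $\rho_i^{\,2}$ is nonnegative, and every $\lambda_i<0$ (the standing stability assumption, needed anyway for $\Hs\in\Htwo$). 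Grouping the terms that share a common eigenvalue value shows at once that an \SSS~transfer function has only \emph{simple} poles, located exactly at those distinct values $\mu$ of the $\lambda_i$ for which $\sum_{i:\lambda_i=\mu}\rho_i^{\,2}>0$; equivalently, a given $\lambda_i$ is a genuine pole of $\Hs$ iff $\boldb$ is not orthogonal to the corresponding eigenspace, i.e. $\rho_i\ne 0$.

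For the equivalence I would first observe that for an \SSS~realization minimality coincides with controllability: since $\boldc=\boldb$ and $\boldA^T=\boldA$, the observability matrix $[\,\boldc~~\boldA^T\boldc~~\cdots~~(\boldA^T)^{n-1}\boldc\,]^T$ is the transpose of the controllability matrix $[\,\boldb~~\boldA\boldb~~\cdots~~\boldA^{n-1}\boldb\,]$, so the two have equal rank and the realization is observable iff controllable iff minimal. Now invoke the Hautus (PBH) test. If some $\lambda_i$ has multiplicity at least two, then $\lambda_i\boldI-\boldA$ has rank at most $n-2$, so $[\,\lambda_i\boldI-\boldA~~\boldb\,]$ cannot have rank $n$ and the realization is not minimal; hence minimality forces the $\lambda_i$ to be pairwise distinct. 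When the $\lambda_i$ are distinct, the PBH condition reduces to a nonzero component of $\boldb$ along every eigenvector, i.e. $\rho_i\ne 0$ for all $i$, which by the previous paragraph says precisely that each $\lambda_i$ really occurs as a pole of $\Hs$. Combining, the \SSS~realization of dimension $n$ is minimal if and only if $\Hs$ has $n$ distinct poles, namely $\lambda_1,\dots,\lambda_n$.

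For the ``moreover'' statement I would build the minimal realization straight from the decomposition: let $\mu_1,\dots,\mu_m$ be the distinct eigenvalue values with $\beta_k:=\sum_{i:\lambda_i=\mu_k}\rho_i^{\,2}>0$, and set $\boldA_{\min}=\mathrm{diag}(\mu_1,\dots,\mu_m)$, $\boldb_{\min}=\boldc_{\min}=(\sqrt{\beta_1},\dots,\sqrt{\beta_m})^T$. This realization is \SSS, reproduces $\Hs=\sum_{k=1}^{m}\beta_k/(s-\mu_k)$, has distinct poles, and its dimension $m$ equals the number of (simple) poles of $\Hs$, hence is minimal by the equivalence just proved. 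Since moreover each $\mu_k<0$ and each $\beta_k>0$, Proposition~\ref{prop:zipchar} identifies $\Hs$ as a strictly proper \ZIP~system.

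The point needing care is the ``only if'' direction: one must read ``the poles of $\Hs$ are distinct'' as ``$\Hs$ has $n$ distinct poles,'' so that both a repeated eigenvalue of $\boldA$ and an eigenvector orthogonal to $\boldb$ — each of which drops the McMillan degree below $n$ — are correctly excluded; the fact from the spectral decomposition that an \SSS~transfer function can never have a repeated pole is exactly what makes ``$\Hs$ has $n$ distinct poles'' equivalent to minimality of the realization. The remaining steps are routine applications of the Hautus test and of the controllability/observability symmetry special to \SSS~systems.
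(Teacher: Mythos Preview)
The paper does not prove this lemma; it is quoted from \cite{zippreserve} without argument. Your proposal supplies a correct, self-contained proof: diagonalizing the symmetric $\boldA$ gives the partial-fraction form $\Hs=\sum_i\rho_i^{\,2}/(s-\lambda_i)$ with nonnegative residues, from which the minimality criterion (via the PBH test together with the controllability/observability coincidence special to \SSS~realizations) and the explicit diagonal \SSS~minimal realization both follow directly; Proposition~\ref{prop:zipchar} then closes the \ZIP~claim. Your final paragraph correctly isolates the only delicate point, namely that ``the poles of $\Hs$ are distinct'' must be read as ``$\Hs$ has $n$ distinct poles,'' since the grouped partial-fraction expansion shows that every \SSS~transfer function has simple poles regardless of minimality; with that reading the equivalence is precisely what you establish.
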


It can easily be verified from the implementation of \IRKA~given above, that for \SSS~systems, the relationship \boldVr=\boldWr\ is maintained throughout the iteration, and the final reduced-order model at Step $4$ of \IRKA~can be obtained by 
\begin{equation}\label{symmetry_preserve}
\begin{array}{cc}
\boldAr=\Qr^T\boldA\Qr&\boldbr=\boldcr=\Qr^T\boldb,
\end{array}
\end{equation}
where \Qr\ is an orthonormal basis for \boldVr;  the reduced system resulting from \IRKA~is also \SSS.  

\section{The Main Result}  \label{sec:proof_thm:localattractor}
\begin{thm}\label{thm:localattractor}
Let \IRKA~be applied to a minimal \SSS~system $H(s)$.  Then every fixed point of \IRKA~which is a local minimizer is locally attractive. In other words, \IRKA~is a locally convergent fixed point iteration to a local minimizer of the $\Htwo$
optimization problem.
\end{thm}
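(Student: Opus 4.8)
The plan is to treat \IRKA\ as a differentiable fixed-point map on the interpolation points (equivalently, on the reduced poles) and appeal to the standard local-convergence criterion: a fixed point at which the Jacobian has spectral radius strictly below one is locally attractive (Ostrowski). Three things have to be carried out — set up the map, identify its fixed points with \Htwo-stationary points, and bound the Jacobian at a local minimizer. For the set-up, Lemma~\ref{lemma:reducessslemma} lets us assume $H(s)=\sum_{i=1}^{n}b_i/(s-\lambda_i)$ is a strictly proper \ZIP\ system ($b_i>0$, $\lambda_i<0$, distinct), and by \eqref{symmetry_preserve} together with Lemma~\ref{lemma:reducessslemma} every reduced model occurring along the iteration is again \ZIP; in particular all interpolation points stay real and positive and all reduced poles real and negative, so the iteration lives on an open subset of $\reals^{r}$. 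Parametrizing a reduced \ZIP\ model by its pole vector $\mu=(\mu_1,\dots,\mu_r)$, one \IRKA\ step replaces the current model (poles $\mu$) by the order-$r$ Hermite interpolant of $H$ at the reflected points $\{-\mu_i\}$ (Theorem~\ref{thm:rationalkrylov}), and the new poles depend only on $\mu$; so \IRKA\ is the iteration $\mu\mapsto\Psi(\mu)$, where $\Psi(\mu)$ is the pole vector of the Hermite interpolant of $H$ at $\{-\mu_i\}$. That $\Psi$ is $C^{1}$ near a fixed point follows from the implicit function theorem applied to the $2r$ Hermite equations, whose Jacobian in the interpolant's unknown poles and residues is the invertible Cauchy-type block matrix appearing below. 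Finally, $\mu^{\star}$ is a fixed point iff the Hermite interpolant at $\{-\mu_i^{\star}\}$ has poles $\mu^{\star}$; since for fixed poles the \Htwo-optimal residues are exactly those matching the values $H(-\mu_i^{\star})$, this is precisely the interpolation condition \eqref{eqn:h2cond}, i.e.\ $\mu^{\star}$ is a stationary point of the reduced cost $\mathcal{J}(\mu):=\|H-H_r(\mu)\|_{\Htwo}^{2}$, where $H_r(\mu)$ has poles $\mu$ and \Htwo-optimal residues $\beta^{\star}(\mu)\in\reals^{r}_{>0}$.

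Now the Jacobian at a fixed point. Write $d_i:=-\mu_i^{\star}>0$; let $M,M^{(2)},M^{(3)}$ be the symmetric Cauchy-type matrices with entries $(d_i+d_j)^{-1}$, $(d_i+d_j)^{-2}$, $(d_i+d_j)^{-3}$; let $D_\beta:=\operatorname{diag}(\beta^{\star})\succ0$ and $\Delta:=\operatorname{diag}\big((H-H_r)''(-\mu_i^{\star})\big)$. Differentiating the $2r$ Hermite equations at the fixed point — where the value-matching equations contribute nothing to the $\mu$-derivative, the values being already matched — and eliminating the residue perturbation gives
\[
D\Psi(\mu^{\star})=-D_\beta^{-1}K^{-1}\Delta,\qquad K:=M^{(2)}M^{-1}M^{(2)}-2M^{(3)}.
\]
A parallel differentiation of $\nabla\mathcal{J}$ (which one computes to be $\nabla\mathcal{J}(\mu)_k=2\beta^{\star}_k(\mu)\big(H'(-\mu_k)-H_r(\mu)'(-\mu_k)\big)$) yields $\nabla^{2}\mathcal{J}(\mu^{\star})=-2D_\beta\big(\Delta+KD_\beta\big)$, hence the clean identity
\[
D\Psi(\mu^{\star})=I+\tfrac12\,D_\beta^{-1}K^{-1}D_\beta^{-1}\,\nabla^{2}\mathcal{J}(\mu^{\star}).
\]
So near a fixed point \IRKA\ behaves like a scaled gradient step for $\mathcal{J}$ with preconditioner $-\tfrac12 D_\beta^{-1}K^{-1}D_\beta^{-1}$.

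It remains to establish two spectral facts about this picture. First, $K\prec0$: from $(d_i+d_j)^{-m}=\frac{1}{(m-1)!}\int_0^{\infty}\tau^{m-1}e^{-(d_i+d_j)\tau}\,d\tau$ one identifies, for every $x\in\reals^{r}$,
\[
x^{T}(-K)x=\Big\|(I-P)\big(\tau\textstyle\sum_i x_i e^{-d_i\tau}\big)\Big\|_{L^{2}(0,\infty)}^{2}\;\ge\;0,
\]
where $P$ is the orthogonal projection onto $\operatorname{span}\{e^{-d_1\tau},\dots,e^{-d_r\tau}\}$; equality forces $x=0$, since $\{\tau e^{-d_i\tau}\}\cup\{e^{-d_j\tau}\}$ is linearly independent. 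Hence the preconditioner $-\tfrac12 D_\beta^{-1}K^{-1}D_\beta^{-1}$ is positive definite, and writing $D\Psi(\mu^{\star})=I+SH$ with $S\prec0$ and $H=\nabla^{2}\mathcal{J}(\mu^{\star})\succ0$ (a nondegenerate local minimizer), $SH$ is similar to $H^{1/2}SH^{1/2}\prec0$, so all eigenvalues of $D\Psi(\mu^{\star})$ are real and $<1$. Second, these eigenvalues must exceed $-1$; this is equivalent to $H\prec-4D_\beta KD_\beta$, i.e.\ via $\nabla^{2}\mathcal{J}(\mu^{\star})=-2D_\beta(\Delta+KD_\beta)$ to $\Delta\succ D_\beta^{1/2}KD_\beta^{1/2}$, and since $K\prec0$ it suffices to prove $\Delta\succeq0$, i.e.\ $(H-H_r)''(-\mu_i^{\star})\ge0$ at every Hermite node. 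This is where the \ZIP\ structure enters: the nodes $-\mu_i^{\star}$ are double zeros of the numerator of $E:=H-H_r$, so $E(s)=q(s)\prod_k(s+\mu_k^{\star})^{2}/\big(p(s)\widehat p(s)\big)$ with $p,\widehat p$ the all-negative-root denominators of $H$ and $H_r$; on $(0,\infty)$ the factors $p,\widehat p$ and $\prod_k(s+\mu_k^{\star})^{2}$ are positive, so $E''(-\mu_i^{\star})$ has the sign of $q(-\mu_i^{\star})$, and a sign analysis of $q$ on $(0,\infty)$ — using the positivity of residues, the negativity of poles of both $H$ and $H_r$, and the stationarity relations $\langle E,(s-\mu_k^{\star})^{-1}\rangle_{\Htwo}=\langle E,(s-\mu_k^{\star})^{-2}\rangle_{\Htwo}=0$ — gives $q\ge0$ there. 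With $\rho\big(D\Psi(\mu^{\star})\big)<1$ in hand, Ostrowski's theorem yields that $\mu^{\star}$ is a locally attractive fixed point, which is the claim.

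The two robust ingredients are the identity for $D\Psi(\mu^{\star})$ and the definiteness $K\prec0$; the genuinely \ZIP-specific and delicate step — the main obstacle — is the ``no overshoot'' bound $\Delta\succeq0$ (equivalently $E''(-\mu_i^{\star})\ge0$), which is what pins the eigenvalues of $D\Psi(\mu^{\star})$ above $-1$ and thereby distinguishes local minimizers from repellent stationary points. A secondary issue is that the argument as sketched needs $\nabla^{2}\mathcal{J}(\mu^{\star})$ nonsingular for the strict inequality $\rho(D\Psi(\mu^{\star}))<1$; a degenerate Hessian at a local minimizer leaves a neutral eigenvalue $1$ and would require a separate higher-order analysis.
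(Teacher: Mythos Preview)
Your architecture matches the paper's: treat \IRKA\ as a fixed-point map on the reduced poles, express the Jacobian at a fixed point through Cauchy-type matrices and the diagonal $\Delta=\operatorname{diag}\big((H-H_r)''(-\mu_i^\star)\big)$, prove positive definiteness of the Schur complement (your $-K$ is the paper's $\boldsymbol{S}_c=\boldsymbol{S}_{22}-\boldsymbol{S}_{12}\boldsymbol{S}_{11}^{-1}\boldsymbol{S}_{12}$) via a Gramian/integral representation, and tie the bound ``eigenvalues $<1$'' to positive definiteness of the Hessian at a local minimizer. Up to sign conventions your $M,M^{(2)},M^{(3)},D_\beta,\Delta$ are the paper's $\boldsymbol{S}_{11},-\boldsymbol{S}_{12},\tfrac12\boldsymbol{S}_{22},\boldsymbol{R},\boldsymbol{E}$, and your Jacobian formula is similar (via $D_\beta$) to the paper's $-\boldsymbol{S}_c^{-1}\boldsymbol{E}\boldsymbol{R}^{-1}$.

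The genuine gap is exactly where you flag it: the step $\Delta\succeq 0$, i.e.\ $(H-H_r)''(-\mu_i^\star)\ge 0$. Your proposed route---reduce to $q\ge 0$ on $(0,\infty)$ via ``a sign analysis using positivity of residues, negativity of poles, and the stationarity relations''---is not carried out, and as stated it cannot work: the $\Htwo$-orthogonality relations you invoke are equivalent to the Hermite conditions already encoded in the factor $\prod_k(s+\mu_k^\star)^2$, so they add nothing, and a zero-counting argument on $(-\infty,0)$ can at best show $q$ does not change sign on $(0,\infty)$, not which sign it has. For a generic \ZIP\ Hermite interpolant (even one satisfying first-order optimality) there is no purely interpolation-theoretic reason for $E\ge 0$ on $(0,\infty)$.

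The paper supplies the missing structural idea: at every step of \IRKA\ on an \SSS\ system (not only at fixed points), $\boldWr=\boldVr$ and the reduced model is a \emph{compression}, $\boldAr=\Qr^T\boldA\Qr$, $\boldbr=\boldcr=\Qr^T\boldb$ with $\Qr$ orthonormal (cf.\ \eqref{symmetry_preserve}). For $s\ge 0$ factor $s\boldI-\boldA=\boldsymbol{L}\boldsymbol{L}^T$ and set $\boldZr=\boldsymbol{L}^T\Qr$; then
\[
H(s)-H_r(s)=(\boldsymbol{L}^{-1}\boldb)^T\Bigl[\boldI-\boldZr(\boldZr^T\boldZr)^{-1}\boldZr^T\Bigr](\boldsymbol{L}^{-1}\boldb)\ \ge\ 0,
\]
the bracket being an orthogonal projector. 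Thus $E\ge 0$ on $[0,\infty)$, the Hermite nodes are minima of a nonnegative function, and $E''(-\mu_i^\star)\ge 0$ follows immediately. Strict positivity of $\Delta$ (which the paper also uses) then comes from a separate zero-counting lemma placing all $n-r-1$ remaining zeros of $E$ in $(-\infty,0)$, so no node can be a zero of order exceeding two. Once you have this, your lower eigenvalue bound and the rest of the argument go through; without it, the proof is incomplete.
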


To proceed with the proof of Theorem \ref{thm:localattractor}, we need four intermediate lemmas.
 The first lemma provides insight into the structure of the zeros of the error system resulting from
 reducing a \SSS~system.
\begin{lemma}\label{lemma:zero_structure}
Let \Hs\ be a \SSS~system of order $n$.  If $H_r(s)$ is a ZIP system  that interpolates \Hs\ at $2r$ points
$s_1,s_2,\ldots,s_{2r}$, not necessarily distinct, in $(0, \infty),$ then all the remaining zeros of the error system lie in $(-\infty,0)$.
\end{lemma}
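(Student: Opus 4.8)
The plan is to set up the error system $E_r(s) = H(s) - H_r(s)$ explicitly, count its zeros and poles, and locate its zeros using the interlacing structure guaranteed by the ZIP property. Write $H(s) = \sum_{i=1}^n \frac{b_i}{s-\lambda_i}$ with $\lambda_i < 0$ and $b_i > 0$ (Proposition~\ref{prop:zipchar} and Lemma~\ref{lemma:reducessslemma}), and write $H_r(s) = \sum_{j=1}^r \frac{\beta_j}{s-\mu_j}$ with $\mu_j < 0$ and $\beta_j > 0$. Then $E_r(s)$ is a strictly proper rational function whose poles are exactly the $n+r$ points $\{\lambda_i\}\cup\{\mu_j\}$, all of which are simple and negative, so the numerator of $E_r$ has degree at most $n+r-1$. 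Among those numerator roots, $2r$ are the prescribed interpolation points $s_1,\dots,s_{2r}$ in $(0,\infty)$ (Hermite interpolation at $r$ points contributes $2r$ roots counted with multiplicity, by Theorem~\ref{thm:h2optimal}'s underlying interpolation mechanism / Theorem~\ref{thm:rationalkrylov}). This leaves at most $n-r-1$ further zeros, and the claim is that every one of them lies in $(-\infty,0)$.

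The key step is to exploit the sign structure of $E_r$ on the negative real axis. First I would observe that since $H(s)$ is ZIP, it is strictly decreasing on each interval between consecutive poles on $(-\infty,0)$ and its zeros interlace its poles; the same holds for $H_r(s)$. The idea is to track the sign of $E_r(x)$ as $x$ ranges over the negative real axis, breaking $(-\infty,0)$ into the open intervals determined by the combined pole set $\{\lambda_i\}\cup\{\mu_j\}$. On each such interval $E_r$ is continuous and monotone-ish in a controlled way, and at each pole $E_r$ blows up to $\pm\infty$ with a definite sign because all residues $b_i$ and $-\beta_j$ have fixed signs. Counting sign changes of $E_r$ across these $n+r$ poles gives a lower bound on the number of real negative zeros of $E_r$; the goal is to show this lower bound is exactly $n-r-1$ (or at least equals the number of zeros not already accounted for by the $2r$ interpolation points in $(0,\infty)$). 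Combined with the degree count — the numerator of $E_r$ has degree exactly $n+r-1$, with $2r$ roots in $(0,\infty)$ — this forces all remaining $n-r-1$ roots to be real and negative, leaving no room for complex zeros or zeros in $(0,\infty)$ other than the interpolation points.

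The main obstacle I anticipate is the bookkeeping in the sign-change / zero-counting argument: one must be careful that the residues of $E_r$ at the poles $\mu_j$ are genuinely nonzero (i.e., $H_r$ does not accidentally interpolate in a degenerate way), that the $2r$ interpolation points in $(0,\infty)$ are correctly counted with multiplicity, and that the interlacing of poles of $H$ and $H_r$ on the negative axis (which may be intertwined in an arbitrary order) still yields enough guaranteed sign changes. A clean way to handle this is to clear denominators: write $E_r(s) = \frac{p(s)}{q_H(s) q_{H_r}(s)}$ where $q_H, q_{H_r}$ are the (monic) denominators, so that $p(s)$ is a real polynomial of degree $n+r-1$ whose sign on $(-\infty,0)$ differs from that of $E_r$ only by the known sign of $q_H q_{H_r}$ on each interval; then a Descartes-type or intermediate-value count on $p$ restricted to the negative axis, together with the already-identified factor $\prod_{k=1}^{2r}(s-s_k)$ which is sign-definite and nonvanishing on $(-\infty,0)$, isolates the remaining factor and shows it has all real negative roots. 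Establishing that the forced number of negative real roots exactly matches the remaining degree — with no deficit — is the crux, and it will rely essentially on the strict ZIP interlacing of both $H$ and $H_r$ rather than merely on pole/residue signs.
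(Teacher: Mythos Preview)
Your approach is essentially the paper's: reduce to a sign-counting argument on the negative real axis using the known signs of the residues, and show that the forced negative real zeros exhaust the remaining $n-r-1$ numerator roots. Two points are worth flagging.

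First, the paper's execution avoids the interleaving bookkeeping you worry about. Rather than tracking the behaviour of $E_r$ near all $n+r$ poles (which requires controlling how the $\lambda_i$ and $\mu_j$ intertwine), the paper evaluates only at the poles $\lambda_j$ of $H$. There the residue of $E_r$ is simply $\phi_j>0$, since $H_r$ is analytic at $\lambda_j$ in the generic case. Writing $E_r = K\,P(s)R(s)/\bigl(Q(s)\tilde Q(s)\bigr)$ with $R(s)=\prod(s-s_k)$ the interpolation factor and $P$ the unknown remaining factor, positivity of the residue at $\lambda_j$ forces $\sgn\bigl(KP(\lambda_j)\bigr)=(-1)^{j-1}\sgn\bigl(\tilde Q(\lambda_j)\bigr)$. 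Since $\tilde Q$ has degree $r$, it can change sign on at most $r$ of the $n-1$ intervals $[\lambda_{j+1},\lambda_j]$; on each of the remaining $n-r-1$ intervals $KP$ must change sign, which pins down all roots of $P$. This is cleaner than your all-poles count, though your count would also close: with $n$ positive residues and $r$ negative ones in some order, the number of adjacent same-sign pairs is at least $(n+r-1)-2r=n-r-1$, matching the degree of $P$.

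Second, your assumption that all $n+r$ poles of $E_r$ are simple fails when some $\lambda_i$ equals some $\mu_j$; in that case the residue of $E_r$ there is $b_i-\beta_j$, whose sign is uncontrolled (and may vanish). The paper treats this case separately: if $p$ poles coincide, the error is degree $(n+r-p-1)/(n+r-p)$, $\tilde Q$ drops to degree $r-p$, and one excludes the at most $2p$ subintervals adjacent to a coincident pole before running the same count, which still yields $n-r-p-1$ forced roots for $P$. You should expect to need a similar case split.
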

\begin{proof}  
By Lemma \ref{lemma:reducessslemma}, we may assume that \Hs\ is a strictly proper \ZIP~systems.  Since \Hs\ is a strictly proper \ZIP~system, its poles are simple and all its residues are positive.  Let
 $\lambda_i <0, \phi_i>0,$ for $i=1,\dots,n$ be the poles and residues of $H(s)$, respectively.  Now let 
$$R(s)=\prod \limits_{i=1}^{2r}(s-s_i),~~P(s)=\prod\limits_{i=1}^{n-r-1}(s+z_i),~~Q(s)=\prod\limits_{i=1}^{n}(s-\lambda_i),~~\tilde{Q}(s)=\prod \limits_{i=1}^{r}(s-\tilde{\lambda}_i),$$ 
where $\tilde{\lambda}_i$, $s_i$, and $z_i$ are, respectively, the poles of \Hr, the interpolation points, and the remaining zeros of the error system. Then for some constant $K$, $\Hs-H_r(s)=K\frac{P(s)R(s)}{Q(s)\tilde{Q}(s)}$. First suppose that $\{\lambda_i\}_{i=1}^n \cap \{\tilde{\lambda}_k\}_{k=1}^r=\emptyset$.  Then for each $\lambda_j$, $j=1,\dots,n$, 
 \begin{equation}
 \text{Res}(\Hs-H_r(s);\lambda_j)=K\frac{P(\lambda_j)R(\lambda_j)}{\prod\limits_{\substack{i=1\\ \lambda_i\ne\lambda_j}}^{n}(\lambda_j-\lambda_i)\tilde{Q}(\lambda_j)}=\phi_i>0.
 \end{equation}
Thus, $\sgn(KP(\lambda_j))=(-1)^{j-1}\sgn(\tilde{Q}(\lambda_j))$ where $\sgn(\alpha)$ denotes the sign of
$\alpha$. Now if $(-1)^{j-1}\sgn(\tilde{Q}(\lambda_j))=(-1)^{j}(\sgn(\tilde{Q}(\lambda_{j+1}))$, then $-\sgn(\tilde{Q}(\lambda_j))= \sgn(\tilde{Q}(\lambda_{j+1}))$, so $\tilde{Q}(s)$ must change sign on the interval $[\lambda_{j+1}, \lambda_j]$.  Since $\tilde{Q}(s)$ is a polynomial of degree $r$, and $r<n$, $\tilde{Q}(s)$ can switch signs at most $r$ times, else $\tilde{Q}(s)\equiv 0$.  But this means there are at least $n-r-1$ intervals $[\lambda_{j_k+1}, \lambda_{j_k}]$, for $k=1, \dots, n-r-1$, for which  $\sgn(\tilde{Q}(\lambda_{j_k}))= \sgn(\tilde{Q}(\lambda_{j_k+1}))$, and therefore $\sgn(KP(\lambda_{j_k}))=-\sgn(KP(\lambda_{j_k+1}))$. So $KP(s)$ must change sign over at least $n-r-1$ intervals, and therefore has at least $n-r-1$ zeros on $[\lambda_n, \lambda_1]$.  Again, since the error is not identically zero when $r<n$, and the degree of $KP(s)$ is $n-r-1$, this implies that all the zeros of $KP(s)$ lie in $(-\infty, 0)$.
 
 Suppose with some $p \le r$, $\lambda_{i_j}=\tilde{\lambda}_{k_j}$ for $j=1, \dots, p$.   Observe from partial fraction expansions of $\Hs$ and $H_r(s)$ that the error can be written as a rational function of degree $n+r-p-1$ over degree $n+r-p$ with distinct poles. $n$ of these poles belong to $H(s)$ and the remaining $r-p$ come from the poles of $H_r(s)$ that are distinct from the poles of $H(s)$.  Now let $$R(s)=\prod \limits_{i=1}^{2r}(s-s_i),~~P(s)=\prod\limits_{i=1}^{n-r-p-1}(s+z_i),~~Q(s)=\prod\limits_{i=1}^{n}(s-\lambda_i),~~\tilde{Q}(s)=\prod \limits_{l=1}^{r-p}(s-\tilde{\lambda}_{k_l}),$$ where $\{\tilde{\lambda}_{k_l}\}_{l=1}^{r-p}=\{\tilde{\lambda}_k\}_{k=1}^r\setminus \{\lambda_i\}_{i=1}^n$.  
Hence, $\Hs-H_r(s)=K\frac{P(s)R(s)}{Q(s)\tilde{Q}(s)}$.  Observe that there are at most $2p$ subintervals of the form $[\lambda_{i^*}, \lambda_{i^*+1}]$ or $[\lambda_{i^*-1}, \lambda_{i^*}]$, where $\lambda_{i^*} \in \{\lambda_i\}_{i=1}^n \cap \{\tilde{\lambda}_k\}_{k=1}^r$.  It follows that there are at least $n-2p-1$ subintervals of the form $[\lambda_i, \lambda_{i+1}]$, where $\lambda_i, \lambda_{i+1} \not \in \{\lambda_i\}_{i=1}^n \cap \{\tilde{\lambda}_k\}_{k=1}^r$.
On each such subinterval for which this is the case, we have  

\begin{equation}
 \text{Res}(\Hs-H_r(s);\lambda_i)=K\frac{P(\lambda_i)R(\lambda_i)}{\prod\limits_{\substack{j=1\\ \lambda_j\ne\lambda_i}}^{n}(\lambda_i-\lambda_j)\tilde{Q}(\lambda_i)}=\phi_i>0.
 \end{equation}

So $\sgn(KP(\lambda_i))=(-1)^{i-1}\sgn(\tilde{Q}(\lambda_i))$.  By the same argument as above where the poles of $H(s)$ and $H_r(s)$ are distinct, either $\tilde{Q}(s)$ or $P(s)$ has a zero on the interval $[\lambda_i, \lambda_{i+1}]$.  Since $\tilde{Q}(s)$ has at most $r-p$ zeros, this means that there are at least $n-2p-1-(r-p)=n-p-r-1$ subintervals between poles of $H(s)$ where $P(s)$ has zeros. Hence, the lemma is proved.
\end{proof}

\begin{lemma}\label{lemma:positiveerror}
Let $H(s)=\boldb^T(s\boldI-\boldA)^{-1}\boldb$ be \SSS, and $H_r(s)=\boldbr^T(s\boldI_r-\boldAr)^{-1}\boldbr$ be any reduced order model of \Hs\ constructed by a compression of \Hs, i.e., \boldAr=$\Qr^T\boldAr\Qr$, \boldbr$=\Qr^T\boldb$.  Then for any $s \ge 0$, $\Hs-H_r(s) \ge 0$.  
\end{lemma}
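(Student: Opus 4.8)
The plan is to reduce the inequality, for each fixed $s\ge 0$, to the elementary fact that an orthogonal projection cannot increase Euclidean norm.

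First I would record that \boldA\ may be taken symmetric negative definite. Indeed, \Hs\ being \SSS\ forces $\boldA=\boldA^T$, so its eigenvalues are real; since (cf.\ Lemma~\ref{lemma:reducessslemma}, and the minimality relevant here) these eigenvalues coincide with the poles of \Hs, which are negative, we get $\boldA\prec 0$, and likewise $\boldAr=\Qr^T\boldA\Qr\prec 0$ because $\Qr$ has full column rank. Consequently, for every $s\ge 0$ the matrix $\boldsymbol{M}:=s\boldI-\boldA$ is symmetric positive definite, and using $\Qr^T\Qr=\boldI_r$ one obtains
\begin{equation}
s\boldI_r-\boldAr=\Qr^T(s\boldI-\boldA)\Qr=\Qr^T\boldsymbol{M}\Qr,
\end{equation}
which is positive definite, hence invertible. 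Thus $\Hs=\boldb^T\boldsymbol{M}^{-1}\boldb$ and $\Hr=\boldb^T\Qr(\Qr^T\boldsymbol{M}\Qr)^{-1}\Qr^T\boldb$ are both well defined for $s\ge 0$.

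Next I would pass through the positive definite square root $\boldsymbol{M}^{1/2}$: set $\boldsymbol{y}:=\boldsymbol{M}^{-1/2}\boldb$ and $\boldsymbol{R}:=\boldsymbol{M}^{1/2}\Qr$. Then $\boldsymbol{R}$ has full column rank, $\boldsymbol{R}^T\boldsymbol{R}=\Qr^T\boldsymbol{M}\Qr$, $\boldsymbol{R}^T\boldsymbol{y}=\Qr^T\boldb$, and therefore
\begin{equation}
\Hs=\boldsymbol{y}^T\boldsymbol{y}=\|\boldsymbol{y}\|_2^2,\qquad
\Hr=\boldsymbol{y}^T\boldsymbol{R}(\boldsymbol{R}^T\boldsymbol{R})^{-1}\boldsymbol{R}^T\boldsymbol{y}=\|\boldsymbol{\Pi}\boldsymbol{y}\|_2^2,
\end{equation}
where $\boldsymbol{\Pi}:=\boldsymbol{R}(\boldsymbol{R}^T\boldsymbol{R})^{-1}\boldsymbol{R}^T$ is the orthogonal projector onto $\operatorname{range}(\boldsymbol{R})$. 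Since $\boldsymbol{I}-\boldsymbol{\Pi}$ is again an orthogonal projector, the Pythagorean identity yields
\begin{equation}
\Hs-\Hr=\|\boldsymbol{y}\|_2^2-\|\boldsymbol{\Pi}\boldsymbol{y}\|_2^2=\|(\boldsymbol{I}-\boldsymbol{\Pi})\boldsymbol{y}\|_2^2\ge 0,
\end{equation}
which is the claim; in fact this identifies $\Hs-\Hr$ with the squared norm of the component of $\boldsymbol{M}^{-1/2}\boldb$ orthogonal to $\boldsymbol{M}^{1/2}\operatorname{range}(\Qr)$.

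The only step requiring genuine care is the first one, establishing $\boldA\prec 0$ (equivalently $\boldsymbol{M}\succ 0$ for all $s\ge 0$): this is precisely where the \SSS/\ZIP\ structure of Section~2 enters, symmetry of \boldA\ converting the stability hypothesis into definiteness. Once that is in hand, everything else — rewriting the compressed resolvent via $\Qr^T\Qr=\boldI_r$, factoring through $\boldsymbol{M}^{1/2}$, and recognizing the resulting quadratic form as $\|\boldsymbol{\Pi}\boldsymbol{y}\|_2^2$ — is routine linear algebra. I would also note in passing that the same computation shows $\Hs\ge 0$ and $\Hr\ge 0$ separately on $[0,\infty)$.
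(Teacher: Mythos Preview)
Your proof is correct and follows essentially the same route as the paper's: both fix $s\ge 0$, factor the positive definite matrix $s\boldI-\boldA$ (the paper via Cholesky $\boldsymbol{LL}^T$, you via the symmetric square root $\boldsymbol{M}^{1/2}$), and recognize $\Hs-\Hr$ as a quadratic form in an orthogonal projector $\boldI-\boldsymbol{\Pi}$ onto the complement of $\boldsymbol{M}^{1/2}\operatorname{range}(\Qr)$. The choice of Cholesky versus square root is cosmetic, and your explicit justification that $\boldA\prec 0$ (which the paper asserts without comment) is a welcome addition.
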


\begin{proof} 
Pick any $s \ge 0$.  Then $(s\boldI_n-\boldA)$ is symmetric, positive definite and has a Cholesky decomposition, $(s\boldI_n-\boldA)= \boldsymbol{LL}^T$. 
Define $\boldZr=\boldsymbol{L}^T\Qr$.   Then
\begin{align*}
H(s)-H_r(s) =&\ \boldb^T\left[ (s\boldI_n-\boldA)^{-1} -\Qr\left(\Qr^T(s\boldI_n-\boldA)\Qr\right)^{-1}\Qr^T \right]\boldb \\
   =&\ (\boldsymbol{L}^{-1}\boldb)^T\left[ \boldI -\boldZr \left(\boldZr^T\boldZr\right)^{-1}\boldZr^T \right](\boldsymbol{L}^{-1}\boldb).
\end{align*}
 Note the last bracketed expression is an orthogonal projector onto $\mathsf{Ran}(\boldZr)^\perp$, hence is positive semidefinite and the conclusion follows. 
\end{proof}

Our convergence analysis  of \IRKA~will use its formulation as a fixed-point iteration.  The analysis 
will build on the framework of  \cite{krajewski}.  Let
\begin{equation}
\Hs=\sum \limits_{i=1}^n\frac{\phi_i}{s-\lambda_i} {\rm~~~and~~~}  H_r(s)=\sum\limits_{j=1}^{r}\frac{\tilde{\phi}_j}{s-\tilde{\lambda}_j}
\end{equation}
be the partial fraction decompositions of \Hs, and $H_r(s)$, respectively.  
Given a set of $r$ interpolation points $\{s_i\}_{i=1}^r$, identify the set with a vector $\poles=[s_1, \dots, s_r]^T$. Construct an interpolatory reduced order model \Hr\ from \poles\, as in  Theorem \ref{thm:rationalkrylov} and identify $\{\tilde{\lambda}_i\}_{i=1}^r$ with a vector $\tilde{\boldsymbol{\lambda}}=[\tilde{\lambda}_1, \dots, \tilde{\lambda}_r]^T$.  Then define the function $\lambda:\complex^{r} \rightarrow \complex^r$ by $\lambda(\poles)=-\tilde{\boldsymbol{\lambda}}$.  Aside from ordering issues, this function is well defined, and the \IRKA~iteration converges when $\lambda(\poles)=\poles$.  Thus convergence of \IRKA~is equivalent to convergence of a fixed point iteration on the function $\lambda(\poles)$. 
Similar to $\poles$ and $\tilde{\boldsymbol{\lambda}}$, let  $\tilde{\boldsymbol{\phi}} = [\tilde{\phi}_1, \dots, \tilde{\phi}_r]^T$.
 Having identified \Hr\ with its poles and residues, the optimal $\Htwo$ model reduction problem may be formulated in terms of minimizing the cost function  $\costfun(\tilde{\boldsymbol{\phi}}, \lambda(\poles)) = \|H-H_r \|_{\Hilbspace}^2$, where 
\begin{align} \label{h2error}
\costfun(\tilde{\boldsymbol{\phi}}, \lambda(\poles))&=\sum \limits^{n}_{i=1} \phi_i(H(\lambda_i)-H_r(\lambda_i)) + \sum \limits^{r}_{j=1} \tilde{\phi}_j(H(\tilde{\lambda}_j)-H_r(\tilde{\lambda}_j))
\end{align}
See \cite{H2} for a derivation of (\ref{h2error}).
Define the matrices 
$\boldsymbol{S}_{11},\boldsymbol{S}_{12},\boldsymbol{S}_{22} \in \IR^{r \times r}$ as 
$$
\left[\boldsymbol{S}_{11}\right]_{i,j} =  -(\tilde{\lambda}_i+\tilde{\lambda}_j)^{-1},~~\left[\boldsymbol{S}_{12}\right]_{i,j} = -(\tilde{\lambda}_i+\tilde{\lambda}_j)^{-2}~~{\rm and}~~
\left[\boldsymbol{S}_{22}\right]_{i,j} =  -2(\tilde{\lambda}_i +\tilde{\lambda}_j)^{-3}
$$
for $i,j=1,\ldots,r$. Also, define $\boldsymbol{R},\boldsymbol{E} \in \IR^{r\times r}$:
 $$\boldsymbol{R}={\rm diag}(\{\tilde{\phi}_1, \dots, \tilde{\phi}_r\}),~~~{\rm and}~~~\boldsymbol{E}={\rm diag}(\{H''(-\tilde{\lambda}_1)-H_r^{\prime\prime}(-\tilde{\lambda}_1), \dots, H''(-\tilde{\lambda}_r)-H_r^{\prime\prime}(-\tilde{\lambda}_r)\}.$$  


\begin{lemma}\label{lemma:E_pos_def}
Let \Hs\ be \SSS\ and let $H_r(s)$ be an \IRKA~interpolant.  Then $\boldsymbol{E}$ is positive definite
at any fixed point of $\lambda(\poles)$.
\end{lemma}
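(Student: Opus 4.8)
The plan is to exploit that $\boldsymbol{E}$ is diagonal, so it is positive definite exactly when each diagonal entry $H''(-\tilde\lambda_i)-H_r''(-\tilde\lambda_i)$ is strictly positive. Write $E_r(s)=H(s)-H_r(s)$ for the error system. At a fixed point of $\lambda(\poles)$ we have $s_i=-\tilde\lambda_i$; since the reduced model produced by \IRKA\ from a \SSS\ system is again \SSS\ and (being minimal of order $r$) a strictly proper \ZIP\ system with distinct negative real poles by Lemma~\ref{lemma:reducessslemma}, each $s_i=-\tilde\lambda_i$ is a positive real number. By Theorem~\ref{thm:rationalkrylov}, $H_r$ is a Hermite interpolant of $H$ at the $s_i$, so $E_r(s_i)=0$ and $E_r'(s_i)=0$. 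Hence the $i$-th diagonal entry of $\boldsymbol{E}$ is precisely $E_r''(s_i)$, and the lemma reduces to the claim that $E_r''(s_i)>0$ for $i=1,\dots,r$.

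The first step is the weak inequality. By Lemma~\ref{lemma:positiveerror}, $E_r(s)\ge 0$ for all $s\ge 0$; since $s_i>0$ lies in the interior of $[0,\infty)$ and $E_r(s_i)=0$, the point $s_i$ is a local minimizer of $E_r$ on the real line, which forces $E_r''(s_i)\ge 0$ (consistently with $E_r'(s_i)=0$). The second step upgrades this to strict positivity by counting zeros of the error system with the help of Lemma~\ref{lemma:zero_structure}. Viewed as a rational function, $E_r$ has a numerator of degree $n+r-1$ (with the degree reduction by $p$ in the case of pole–zero cancellation between $H$ and $H_r$, exactly as accounted for in the proof of Lemma~\ref{lemma:zero_structure}). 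Hermite interpolation contributes a zero of multiplicity at least $2$ at each of the $r$ distinct points $s_1,\dots,s_r\in(0,\infty)$, i.e.\ at least $2r$ zeros counted with multiplicity, while Lemma~\ref{lemma:zero_structure} forces the remaining $n-r-1$ zeros to lie in $(-\infty,0)$, hence to be disjoint from the $s_i$. Because $2r+(n-r-1)=n+r-1$ exhausts the degree of the numerator, each $s_i$ must be a zero of $E_r$ of multiplicity exactly $2$; in particular $E_r''(s_i)\neq 0$. Combined with $E_r''(s_i)\ge 0$ from the first step, this yields $E_r''(s_i)>0$ for every $i$, so $\boldsymbol{E}$ is positive definite at the fixed point.

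I expect the main obstacle to be the bookkeeping in the final step: reconciling the degree of the numerator of $E_r$, the $2r$ Hermite interpolation zeros at the $s_i$, and the $n-r-1$ zeros located by Lemma~\ref{lemma:zero_structure} when some poles of $H_r$ coincide with poles of $H$ (the case $p\le r$ of that lemma, where both numerator and denominator drop in degree by $p$). One must also confirm that at a fixed point the \IRKA\ interpolant is genuinely minimal with distinct poles, so that Lemma~\ref{lemma:zero_structure} applies with $H_r$ actually a \ZIP\ system; everything else — the reduction to $E_r''(s_i)$, the sign from Lemma~\ref{lemma:positiveerror}, and the local-minimum argument — is routine.
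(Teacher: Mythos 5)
Your proposal is correct and follows essentially the same route as the paper: the weak inequality $H''(-\tilde\lambda_i)-H_r''(-\tilde\lambda_i)\ge 0$ from Lemma~\ref{lemma:positiveerror} plus the local-minimum observation at the Hermite interpolation points, then strictness from Lemma~\ref{lemma:zero_structure}, which forces the error to have exactly $2r$ zeros (counted with multiplicity) in the right half plane so that each $-\tilde\lambda_i$ is a zero of multiplicity exactly two. Your explicit degree bookkeeping (including the cancellation case $p\le r$) simply spells out what the paper compresses into the phrase ``exactly $2r$ zeros in $\complex_+$,'' so no further comment is needed.
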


\begin{proof}
By Lemma \ref{lemma:positiveerror}, $\Hs-H_r(s) \ge 0$ for all $s\in [0, \infty)$.  Thus the points $H(-\tilde{\lambda}_i)-H_r(-\tilde{\lambda}_i)$ are local minima of $\Hs-H_r(s)$ on $[0, \infty)$ for $i=1,\dots,r$.  It then follows that $H''(-\tilde{\lambda}_i)-H_r^{\prime\prime}(-\tilde{\lambda}_i)\ge 0$.  But by Lemma \ref{lemma:zero_structure}, $\Hs-H_r(s)$ has exactly $2r$ zeros in $\complex_{+}$, so $H''(-\tilde{\lambda}_i)-H_r^{\prime\prime}(-\tilde{\lambda}_i) > 0$ for $i=1, \dots, r$.
\end{proof}


\begin{lemma}\label{lemma:Spos}
The matrix
$
\tilde{\boldsymbol{S}}=
\begin{bmatrix}
\boldsymbol{S}_{11} &\boldsymbol{S}_{12}\\
\boldsymbol{S}_{12}& \boldsymbol{S}_{22}
\end{bmatrix} 
$
is positive definite.
\end{lemma}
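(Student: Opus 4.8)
The plan is to recognize the block matrix $\tilde{\boldsymbol{S}}$ as a Gram-type matrix built from the functions $e^{\tilde{\lambda}_i t}$ and $t\,e^{\tilde{\lambda}_i t}$ on $L^2(0,\infty)$, so that positive definiteness is immediate from linear independence of these $2r$ functions. Concretely, for $\mathrm{Re}(\mu),\mathrm{Re}(\nu)<0$ one has $\int_0^\infty e^{\mu t}e^{\nu t}\,dt = -(\mu+\nu)^{-1}$, and differentiating under the integral sign in $\mu$ gives $\int_0^\infty t\,e^{\mu t}e^{\nu t}\,dt = -(\mu+\nu)^{-2}$ and $\int_0^\infty t^2 e^{\mu t}e^{\nu t}\,dt = -2(\mu+\nu)^{-3}$. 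Hence, writing $f_i(t)=e^{\tilde{\lambda}_i t}$ and $g_i(t)=t\,e^{\tilde{\lambda}_i t}$ for $i=1,\dots,r$, we have $[\boldsymbol{S}_{11}]_{i,j}=\langle f_i,f_j\rangle$, $[\boldsymbol{S}_{12}]_{i,j}=\langle f_i,g_j\rangle$, and $[\boldsymbol{S}_{22}]_{i,j}=\langle g_i,g_j\rangle$, where $\langle\cdot,\cdot\rangle$ is the $L^2(0,\infty)$ inner product. Thus $\tilde{\boldsymbol{S}}$ is exactly the Gram matrix of the ordered list $f_1,\dots,f_r,g_1,\dots,g_r$.

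First I would establish the integral identities above, noting that the poles $\tilde{\lambda}_i$ of an \IRKA~interpolant of a \SSS\ system lie in the open left half-plane (by Lemma \ref{lemma:reducessslemma}, the reduced system is a \ZIP\ system with negative poles), so all the integrals converge absolutely and differentiation under the integral sign is justified. Next I would assemble the block identification of $\tilde{\boldsymbol{S}}$ as a Gram matrix. Finally I would invoke the standard fact that a Gram matrix is positive definite if and only if the generating vectors are linearly independent, and verify that $f_1,\dots,f_r,g_1,\dots,g_r$ are linearly independent in $L^2(0,\infty)$: a nontrivial relation $\sum_i(\alpha_i + \beta_i t)e^{\tilde{\lambda}_i t}\equiv 0$ would force a nonzero polynomial-exponential function (with the distinct exponents $\tilde{\lambda}_i$, which are distinct because the reduced \ZIP\ system has simple poles) to vanish identically, which is impossible. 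Therefore $\tilde{\boldsymbol{S}}\succ 0$.

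The only mild subtlety — and the step I would treat most carefully — is the linear independence argument, since it relies on the $\tilde{\lambda}_i$ being distinct; this is exactly where the \SSS/\ZIP\ structure (Lemma \ref{lemma:reducessslemma}, giving simple reduced poles) enters, and it should be stated explicitly rather than taken for granted. Everything else is a routine computation. An equivalent, perhaps cleaner, phrasing avoids $L^2$ entirely: for any real vector $\begin{bmatrix}\boldsymbol{x}^T & \boldsymbol{y}^T\end{bmatrix}^T\neq 0$, set $h(t)=\sum_{i=1}^r (x_i + y_i t)e^{\tilde{\lambda}_i t}$; then $\begin{bmatrix}\boldsymbol{x}^T & \boldsymbol{y}^T\end{bmatrix}\tilde{\boldsymbol{S}}\begin{bmatrix}\boldsymbol{x}\\ \boldsymbol{y}\end{bmatrix} = \int_0^\infty h(t)^2\,dt > 0$ since $h\not\equiv 0$. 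I would present it in this second form, as it makes the conclusion transparent in one line once the moment identities are in hand.
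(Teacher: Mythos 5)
Your approach is essentially the paper's: the paper proves positive definiteness by writing $\boldsymbol{z}^T\tilde{\boldsymbol{S}}\boldsymbol{z}=\int_0^\infty\big[\sum_i z_i e^{\tilde{\lambda}_i t}-t\sum_i z_{r+i}e^{\tilde{\lambda}_i t}\big]^2\,dt$, obtaining the needed moment identities via Lyapunov equations and integration by parts rather than by differentiating under the integral sign, so the two arguments differ only in bookkeeping. One sign in your moment table is wrong, however: for $\mathrm{Re}(\mu+\nu)<0$ one has $\int_0^\infty t\,e^{(\mu+\nu)t}\,dt=+(\mu+\nu)^{-2}$, not $-(\mu+\nu)^{-2}$ (differentiating $-(\mu+\nu)^{-1}$ with respect to $\mu$ gives $+(\mu+\nu)^{-2}$). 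Consequently $[\boldsymbol{S}_{12}]_{i,j}=-\langle f_i,g_j\rangle$, and $\tilde{\boldsymbol{S}}$ is not the Gram matrix of $(f_1,\dots,f_r,g_1,\dots,g_r)$ but of $(f_1,\dots,f_r,-g_1,\dots,-g_r)$; equivalently, it is congruent to your Gram matrix via $\mathrm{diag}(\boldsymbol{I},-\boldsymbol{I})$. The conclusion is untouched: in your one-line version simply take $h(t)=\sum_i(x_i-y_i t)e^{\tilde{\lambda}_i t}$, which is exactly the integrand appearing in the paper, so the slip is cosmetic but should be fixed since the stated identity is false as written. A genuine plus of your write-up is that you make the strict-positivity step explicit (linear independence of $\{e^{\tilde{\lambda}_i t},\,t e^{\tilde{\lambda}_i t}\}$, which requires the reduced poles to be distinct and in the open left half-plane, as guaranteed by the \SSS/\ZIP\ structure); the paper asserts the strict inequality without comment.
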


\begin{proof}
  We will show that for any non-zero vector $\boldsymbol{z}=[z_1, z_2,\dots, z_{2r}]^T \in \IR^{2r}$
$$
\boldsymbol{z}^T\boldsymbol{S}\boldsymbol{z} = \int_{0}^{\infty}\bigg[\sum\limits_{i=1}^r z_ie^{\tilde{\lambda}_i t}-t\Big(\sum \limits_{i=1}^rz_{r+i}e^{\tilde{\lambda}_i t}\Big) \bigg]^2 \ud t >0.
$$

Define $\boldsymbol{z}_r=[z_1, z_2,\dots, z_{r}]^T \in \IR^{r}$  and $\boldsymbol{z}_{2r}=[z_{r+1}, z_{r+2},\dots, z_{2r}]^T \in \IR^{r}$. Then 

\begin{equation}
\boldsymbol{z}^T\boldsymbol{S}\boldsymbol{z}= \boldsymbol{z}_r^T\boldsymbol{S}_{11}\boldsymbol{z}_r +2\boldsymbol{z}_r^T\boldsymbol{S}_{12}\boldsymbol{z}_{2r}+\boldsymbol{z}_{2r}^T\boldsymbol{S}_{22}\boldsymbol{z}_{2r} \label{Seqn}
\end{equation}

Let $\boldsymbol{\Lambda}= \text{diag}(\tilde{\lambda}_1, \dots, \tilde{\lambda}_r)$ and $\boldsymbol{u}$ be a vector of $r$ ones. Note that $\boldsymbol{S}_{11}$ solves the Lyapunov equation $\boldsymbol{\Lambda} \boldsymbol{S}_{11} + \boldsymbol{S}_{11} \boldsymbol{\Lambda} + 
 \boldsymbol{uu}^T = \boldsymbol{0}$.  Thus, 
\begin{align}
\boldsymbol{z}_r^T\boldsymbol{S}_{11}\boldsymbol{z}_r&=\int_0^{\infty}\boldsymbol{z}_r^Te^{\boldsymbol{\Lambda} t}\boldsymbol{uu}^Te^{\boldsymbol{\Lambda} t} \boldsymbol{z}_r \ud t
= \int_{0}^{\infty} \Big(\sum \limits_{i=1}^r z_i e^{\tilde{\lambda}_i t} \Big)^2 \ud t \label{S11eqn}
\end{align}

Similarly, $\boldsymbol{S}_{12}$ solves $\boldsymbol{\Lambda} \boldsymbol{S}_{12} + \boldsymbol{S}_{12} \boldsymbol{\Lambda} - \boldsymbol{S}_{11} = \boldsymbol{0}$. An application of integration by parts gives:
\begin{align}
\int_{0}^{\infty}t \Big(\sum \limits_{i=1}^{r} z_i e^{\tilde{\lambda}_i t}\Big)\Big(\sum \limits_{i=1}^{r} z_{r+i} e^{\tilde{\lambda}_i t}\Big)\ud t&=\int_{0}^{\infty}t(\boldsymbol{z}_r^T(e^{\boldsymbol{\Lambda} t}\boldsymbol{uu}^Te^{\boldsymbol{\Lambda} t})\boldsymbol{z}_{2r})\ud t \nonumber \\
&=\boldsymbol{z}_r^T\Big[-t (e^{\boldsymbol{\Lambda} t}\boldsymbol{S}_{11}e^{\boldsymbol{\Lambda} t})\Big]_{0}^{\infty}\boldsymbol{z}_{2r}+\boldsymbol{z}_r^T\Big(\int_{0}^{\infty} e^{\boldsymbol{\Lambda} t} \boldsymbol{S}_{11}e^{\boldsymbol{\Lambda} t} \ud t\Big)\boldsymbol{z}_{2r} \nonumber\\
&=-\boldsymbol{z}_r^T\boldsymbol{S}_{12}\boldsymbol{z}_{2r} \label{S12eqn}
\end{align}
Finally, note that $\boldsymbol{S}_{22}$ solves $\boldsymbol{\Lambda} \boldsymbol{S}_{22} + \boldsymbol{S}_{22} \boldsymbol{\Lambda} - 2\boldsymbol{S}_{12} = \boldsymbol{0}$.
 Repeated applications of integration by parts then yields the equality:
\begin{equation}
\boldsymbol{z}_{2r}^T\boldsymbol{S}_{22}\boldsymbol{z}_{2r}=\int_{0}^{\infty} t^2\Big(\sum \limits_{i=1}^{r} z_{r+1} e^{\tilde{\lambda}_i t} \Big)^2 \ud t \label{S22eqn}
\end{equation}

Combining equations (\ref{Seqn}), (\ref{S11eqn}), (\ref{S12eqn}), and (\ref{S22eqn}) gives the desired results since
$$
 \mbox{$
\boldsymbol{z}^T\boldsymbol{S}\boldsymbol{z}= \int_{0}^{\infty}\bigg[\sum\limits_{i=1}^r z_ie^{\tilde{\lambda}_i t}-t\Big(\sum \limits_{i=1}^rz_{r+i}e^{\tilde{\lambda}_i t}\Big) \bigg]^2 \ud t.
$}
$$
\end{proof}
Then it follows that the Schur complement $\boldsymbol{S}_{22}-\boldsymbol{S}_{12}\boldsymbol{S}_{11}^{-1}\boldsymbol{S}_{12}$ of $\tilde{\boldsymbol{S}}$ is also positive definite.  With the setup above, we may now commence with the proof of Theorem \ref{thm:localattractor}.

{\bf Proof of Theorem \ref{thm:localattractor}:}
It suffices to show that for any fixed point which is a local minimizer of  $\costfun(\tilde{\boldsymbol{\phi}}, \lambda(\poles))$, the eigenvalues of the Jacobian of $\lambda(\poles)$ are bounded in magnitude by 1.  As shown in \cite{krajewski}, the Jacobian of $\lambda(\poles)$ can be written as $-\boldsymbol{S}_c^{-1}\boldsymbol{K}$ 
where
$$ 
\boldsymbol{S}_c=\boldsymbol{S}_{22}-\boldsymbol{S}_{12}\boldsymbol{S}_{11}^{-1}\boldsymbol{S}_{12}\qquad \text{ and } \qquad
\boldsymbol{K}=\boldsymbol{E}\boldsymbol{R}^{-1}.
$$
First off, note that from Lemma \ref{lemma:E_pos_def}, and the fact that \Hs\ is a ZIP system by Lemma \ref{lemma:reducessslemma}, $\boldsymbol{K}$ is positive definite.  Evaluating the pencil $\boldsymbol{K}-\lambda\boldsymbol{S}_c$ at $\lambda=1$ gives
$$
\boldsymbol{\Phi}=-\boldsymbol{S}_{22}+\boldsymbol{E}\boldsymbol{R}^{-1}+\boldsymbol{S}_{12}\boldsymbol{S}_{11}^{-1}\boldsymbol{S}_{12},
$$
 This pencil is regular since $\boldsymbol{S}_c$ is positive definite by Lemma \ref{lemma:Spos}, and therefore $\det(\boldsymbol{K}-\lambda \boldsymbol{S}_c)$ is zero if and only if $\det(\boldsymbol{S}_c^{-1}\boldsymbol{K}-\lambda\boldsymbol{I})=0$. 
 
 Let $\nabla^2\costfun$ denote the Hessian of the 
cost function $\costfun(\tilde{\boldsymbol{\phi}}, \lambda(\poles))$.
 As shown in \cite{krajewski},   $\nabla^2\costfun$ can be written as
$$
\nabla^2\costfun= 
\begin{bmatrix}
\boldsymbol{I} &\boldsymbol{0}\\
\boldsymbol{0}&\boldsymbol{R}
\end{bmatrix}
\boldsymbol{M}
\begin{bmatrix}
\boldsymbol{I} &\boldsymbol{0}\\
\boldsymbol{0}&\boldsymbol{R}
\end{bmatrix},
~~~{\rm where}~~~ 
\boldsymbol{M} = 
\begin{bmatrix}
\boldsymbol{S}_{11}&\boldsymbol{S}_{12}\\
\boldsymbol{S}_{12}&\boldsymbol{S}_{22}-\boldsymbol{E}\boldsymbol{R}^{-1}
\end{bmatrix}.
$$
 
  Note that $-\boldsymbol{\Phi}$ is the Schur complement of $\boldsymbol{M}$. Hence, if the fixed point is a local minimimum, then $-\boldsymbol{\Phi}$ must be positive definite and so for $\lambda=1$ the pencil is negative definite. Since both $\boldsymbol{K}$ and $\boldsymbol{S}_c$ are positive definite,
 there exists a nonsingular transformation $\boldsymbol{Z}$ by which the quadratic form
$\displaystyle 
\boldsymbol{y}^T(\boldsymbol{K}-\lambda\boldsymbol{S}_c)\boldsymbol{y}
$
is transformed into
$\displaystyle
\boldsymbol{z}^T(\boldsymbol{\Lambda}-\lambda\boldsymbol{I})\boldsymbol{z},
$
where $\boldsymbol{\Lambda}$ is a diagonal matrix formed from the solutions of 
\begin{equation}\label{solutions}
\det(\boldsymbol{K}-\lambda\boldsymbol{S}_c)=0.
\end{equation}
Thus, the solutions of (\ref{solutions}) correspond to the eigenvalues of $\boldsymbol{S}_c^{-1}\boldsymbol{K}$. 
$\boldsymbol{\Lambda}-\boldsymbol{I}$ must be negative definite since $\boldsymbol{\Phi}$ is, and therefore the eigenvalues of the $\boldsymbol{S}_c^{-1}\boldsymbol{K}$ must be real-valued and less than one.  Furthermore, note that $\boldP=\boldsymbol{S}_c^{-1}\boldsymbol{K}$ solves the Lyapunov equation
$$
\boldP\boldsymbol{S}^{-1}_c+\boldsymbol{S}^{-1}_c\boldP^T=2\boldsymbol{S}^{-1}_c\boldsymbol{K}\boldsymbol{S}^{-1}_c,
$$
so by the standard inertia result, all the eigenvalues of $\boldsymbol{S}_c^{-1}\boldsymbol{K}$ are positive, and the desired result follows.   $\Box$

\bibliographystyle{plainnat}
\bibliography{references}

\begin{thebibliography}{22}
\providecommand{\natexlab}[1]{#1}
\providecommand{\url}[1]{\texttt{#1}}
\expandafter\ifx\csname urlstyle\endcsname\relax
  \providecommand{\doi}[1]{doi: #1}\else
  \providecommand{\doi}{doi: \begingroup \urlstyle{rm}\Url}\fi

\bibitem[Antoulas(2005)]{antB}
A.C. Antoulas.
\newblock \emph{{Approximation of Large-Scale Dynamical Systems (Advances in
  Design and Control)}}.
\newblock Society for Industrial and Applied Mathematics, Philadelphia, PA,
  USA, 2005.

\bibitem[Beattie and Gugercin(2007)]{beattie2007kbm}
C.A. Beattie and S.~Gugercin.
\newblock {Krylov}-based minimization for optimal {$\mathcal{H}_2$} model
  reduction.
\newblock \emph{46th IEEE Conference on Decision and Control}, pages
  4385--4390, Dec. 2007.

\bibitem[Beattie and Gugercin(2009)]{beattie2009trm}
C.A. Beattie and S.~Gugercin.
\newblock A trust region method for optimal {$\mathcal{H}_2$} model reduction.
\newblock \emph{48th IEEE Conference on Decision and Control}, Dec. 2009.

\bibitem[Druskin et~al.(2009)Druskin, Knizhnerman, and
  Zaslavsky]{druskin2009solution}
V.~Druskin, L.~Knizhnerman, and M.~Zaslavsky.
\newblock {Solution of large scale evolutionary problems using rational Krylov
  subspaces with optimized shifts}.
\newblock \emph{SIAM Journal on Scientific Computing}, 31\penalty0
  (5):\penalty0 3760--3780, 2009.

\bibitem[Fulcheri and Olivi(1998)]{fulcheri1998mrh}
P.~Fulcheri and M.~Olivi.
\newblock {Matrix rational $\Htwo$ approximation: a gradient algorithm based on
  Schur analysis}.
\newblock \emph{SIAM Journal on Control and Optimization}, 36\penalty0
  (6):\penalty0 2103--2127, 1998.

\bibitem[Grimme(1997)]{Grim}
E.J. Grimme.
\newblock \emph{{Krylov projection methods for model reduction}}.
\newblock PhD thesis, University of Illinois, 1997.

\bibitem[Gugercin(2005)]{gugercin2005irk}
S.~Gugercin.
\newblock An iterative rational {Krylov} algorithm {(IRKA)} for optimal
  $\mathcal{H}_2$ model reduction.
\newblock In \emph{Householder Symposium XVI}, Seven Springs Mountain Resort,
  PA, USA, May 2005.

\bibitem[Gugercin et~al.(2008)Gugercin, Antoulas, and Beattie]{H2}
S.~Gugercin, A.C. Antoulas, and C.~Beattie.
\newblock $\mathcal{H}_2$ model reduction for large-scale linear dynamical
  systems.
\newblock \emph{SIAM Journal on Matrix Analysis and Applications}, 30\penalty0
  (2):\penalty0 609--638, 2008.

\bibitem[Kellems et~al.(2009)Kellems, Roos, Xiao, and Cox]{KRXC08}
A.R. Kellems, D.~Roos, N.~Xiao, and S.J. Cox.
\newblock Low-dimensional, morphologically accurate models of subthreshold
  membrane potential.
\newblock \emph{Journal of Computational Neuroscience}, 27\penalty0
  (2):\penalty0 161--176, 2009.

\bibitem[Korvink and Rudnyi(2005)]{KorR05}
J.G. Korvink and E.B. Rudnyi.
\newblock Oberwolfach benchmark collection.
\newblock In P.~Benner, V.~Mehrmann, and D.~C. Sorensen, editors,
  \emph{Dimension Reduction of Large-Scale Systems}, volume~45 of \emph{Lecture
  Notes in Computational Science and Engineering}, pages 311--315.
  Springer-Verlag, Berlin/Heidelberg, Germany, 2005.

\bibitem[Krajewski et~al.(1995)Krajewski, Lepschy, Redivo-Zaglia, and
  Viaro]{krajewski}
W.~Krajewski, A.~Lepschy, M.~Redivo-Zaglia, and U.~Viaro.
\newblock {A program for solving the $\mathcal{L}_2$ reduced-order model
  problem with fixed denominator degree}.
\newblock \emph{Numerical Algorithms}, 9\penalty0 (2):\penalty0 355--377, 1995.

\bibitem[Liu et~al.(1998)Liu, Sreeram, and Teo]{zippreserve}
W.Q. Liu, V.~Sreeram, and K.L. Teo.
\newblock {Model Reduction for State-space Symmetric Systems}.
\newblock \emph{Systems \& Control Letters}, 34:\penalty0 209--215, 1998.

\bibitem[Meier~III and Luenberger(1967)]{meieriii1967approximation}
L.~Meier~III and D.~Luenberger.
\newblock Approximation of linear constant systems.
\newblock \emph{Automatic Control, IEEE Transactions on}, 12\penalty0
  (5):\penalty0 585--588, 1967.

\bibitem[Ruhe(1998)]{ruhe1998rational}
A.~Ruhe.
\newblock {Rational Krylov: A practical algorithm for large sparse nonsymmetric
  matrix pencils}.
\newblock \emph{SIAM Journal on Scientific Computing}, 19\penalty0
  (5):\penalty0 1535--1551, 1998.

\bibitem[Spanos et~al.(1992)Spanos, Milman, and
  Mingori]{spanos1992anewalgorithm}
J.T. Spanos, M.H. Milman, and D.L. Mingori.
\newblock A new algorithm for {$L^2$} optimal model reduction.
\newblock \emph{Automatica}, 28\penalty0 (5):\penalty0 897--909, 1992.

\bibitem[Srinivasan and Myszkorowski(1997)]{Zipchar}
B.~Srinivasan and P.~Myszkorowski.
\newblock {Model reduction of systems with zeros interlacing the poles}.
\newblock \emph{Systems \& Control Letters}, 30\penalty0 (1):\penalty0 19--24,
  1997.

\bibitem[Van~Dooren et~al.(2008)Van~Dooren, Gallivan, and
  Absil]{vandooren2008hom}
P.~Van~Dooren, K.A. Gallivan, and P.A. Absil.
\newblock {$\mathcal{H}_2$-optimal model reduction of MIMO systems}.
\newblock \emph{Applied Mathematics Letters}, 21\penalty0 (12):\penalty0
  1267--1273, 2008.

\bibitem[Villemagne and Skelton(1987)]{Skeltonlate}
C.D. Villemagne and R.E. Skelton.
\newblock {Model reduction using a projection formulation}.
\newblock \emph{Intl. J. Contr}, 46:\penalty0 2141--2169, 1987.

\bibitem[Wilson(1970)]{wilson1970optimum}
D.A. Wilson.
\newblock Optimum solution of model-reduction problem.
\newblock \emph{Proc. IEE}, 117\penalty0 (6):\penalty0 1161--1165, 1970.

\bibitem[Yousuff and Skelton(1984)]{Skelt1}
A.~Yousuff and R.E. Skelton.
\newblock {Covariance Equivalent Realizations with Application to Model
  Reduction of Large Scale Systems}.
\newblock \emph{Control and Dynamic Systems}, 22, 1984.

\bibitem[Yousuff et~al.(1985)Yousuff, Wagie, and Skelton]{Skelt2}
A.~Yousuff, D.A. Wagie, and R.E. Skelton.
\newblock {Linear system approximation via covariance equivalent realizations}.
\newblock \emph{Journal of mathematical analysis and applications},
  106\penalty0 (1):\penalty0 91--115, 1985.

\bibitem[Zigic et~al.(1993)Zigic, Watson, and Beattie]{zigic1993contragredient}
D.~Zigic, L.T. Watson, and C.A. Beattie.
\newblock Contragredient transformations applied to the optimal projection
  equations.
\newblock \emph{Linear algebra and its applications}, 188:\penalty0 665--676,
  1993.

\end{thebibliography}

\end{document}